\newcommand{\auskommentieren}[1]{}
\newcommand{\beq}{\begin{equation}}
\newcommand{\eeq}{\end{equation}}
\DeclareMathOperator{\graph}{graph}
\title{Error estimate for a finite element approximation of the solution of a linear parabolic equation on a two-dimensional surface} 
\author{Heiko Kr\"oner\thanks{Bereich Optimierung und Approximation, Fachbereich Mathematik, Universit\"at Hamburg, Bundesstrasse 55, 20146 Hamburg, Germany,
{\tt Heiko.Kroener@uni-hamburg.de}}}
\begin{document}
\maketitle
\slugger{mms}{xxxx}{xx}{x}{x--x}%slugger should be set to mms, siap, sicomp, sicon, sidma, sima, simax, sinum, siopt, sisc, or sirev

\begin{abstract}
We show that a certain error estimate for a fully discrete finite element approximation
of the solution of the heat equation which is defined in a two-dimensional 
Euclidean domain carries over to the case of a general linear parabolic equation which is defined
on a two-dimensional surface.
\end{abstract}

\begin{keywords}
linear parabolic equation; two-dimensional surface; finite elements
\end{keywords}
%\begin{AMS}\end{AMS} 
\section{Introduction}

In many applications it is important to consider PDEs which are defined on surfaces and not in Euclidean space, especially in the case of parabolic equations it is of interest to assume that these surfaces (where the equation is defined) evolve with respect to time in a certain prescribed way.
In \cite{DE07} the so-called evolving surface finite element method (ESFEM) is proposed in order to solve an advection-diffusion equation on an evolving surface, cf. \cite[Sections 1.1 and 1.2]{DE07}.
There are several papers which deal with linear parabolic equations on evolving surfaces, e.g. 
in \cite{KG, DE13} it is shown that classical $L^2$- and $L^{\infty}$- estimates
for a semi-discrete approximation
carry over to ESFEM and in \cite{LMV13, DLM12} Runge-Kutta schemes and backward difference 
schemes are considered; we also mention 
\cite{OR2014, ORX2014}.

In this paper we show in details that an error estimate for a fully discrete
finite element approximation of the
solution of the heat equation which is stated in \cite[Theorem 3.1]{DeckelnickHinze}
for the two and three dimensional Euclidean setting carries over to the case
of a general linear parabolic equation which is defined on a two dimensional surface.
Apart from being of interest by itself we will use our transferred error estimate 
together with \cite{BorukhavaKroener} in a further paper which is joint with Michael Hinze to consider a linear-quadratic 
PDE-restricted optimization problem on moving surfaces for which the motion is a priori 
given.

Our paper is organized as follows. In Section \ref{section1} we state some general facts 
about finite elements on surfaces.
In Section \ref{section2} we formulate the equation under consideration and its 
discretization.
In Section \ref{section3} we state and prove our main result Theorem 
\ref{main_result}.

\section{Finite elements on surfaces}\label{section1}
Let  $\Gamma_0$ be a smooth two-dimensional, embedded, orientable, closed hypersurface in $\mathbb{R}^3$. 
Throughout the paper we choose a fixed finite atlas for $\Gamma_0$. We triangulate $\Gamma_0$ 
by a family $T_h$ of flat triangles with corners (i.e. nodes) lying on $S=\Gamma_0$. We denote the surface of class $C^{0,1}$ given by the union of the triangles $\tau \in T_h$ by $\Gamma_h=S_h$; the union of the corresponding nodes is denoted by $N_h$. Here, $h>0$ denotes a discretization parameter which is related to the triangulation in the following way.
For $\tau \in T$ we define the diameter $\rho(\tau)$  of the smallest disc containing $\tau$, the diameter
 $\sigma(\tau)$ of the largest disc contained in $\tau$ and
\begin{equation}
h = \max_{\tau \in T_h}\rho(\tau), \quad \gamma_h = \min_{\tau \in T_h}\frac{\sigma(\tau)}{h}.
\end{equation}
We assume that the family $(T_h)_{h>0}$ is quasi-uniform, i.e. $\gamma_h \ge \gamma_0 >0$.
We let 
\begin{equation}
V_h= X_h = \{v\in C^0(S_h): v_{|\tau}\ \text{linear for all}\ \tau \in T_h \}
\end{equation}
be the space of continuous piecewise linear finite elements.
Let $N$ be a tubular neighborhood of $S$ in which 
the Euclidean metric of $N$ can be written in the coordinates $(x^0, x)=(x^0, x^i)$ of the tubular neighborhood as
\begin{equation}
\bar g_{\alpha \beta} = (dx^0)^2 + \sigma_{ij}(x)dx^idx^j.
\end{equation} 
Here, $x^0$ denotes the globally (in $N$) defined signed distance to $S$,
$x=(x^i)_{i=1,2}$ local coordinates for $S$ and $\sigma_{ij}=\sigma_{ij}(x)$ the metric of $S$.

For small $h$ we can write $S_h$ as graph (with respect to the coordinates
of the tubular neighborhood) over $S$, i.e.
\begin{equation} \label{30}
S_h = \graph \psi = \{(x^0, x): x^0 = \psi(x), x \in S\}
\end{equation}
where $\psi=\psi_h \in C^{0,1}(S)$ suitable. Note, that 
\begin{equation} \label{31}
|D\psi|_{\sigma}\le c h, \quad |\psi|\le ch^2.
\end{equation}
The induced metric of $S_h$ is given by
\begin{equation}
g_{ij}(\psi(x), x) = \frac{\partial \psi}{\partial x^i}(x) \frac{\partial \psi}{\partial x^j}(x) + \sigma_{ij}(x).
\end{equation}
Hence we have for the metrics, their inverses and their determinants
\begin{equation}
g_{ij}=\sigma_{ij}+O(h^2), \quad 
a^{ij} = \sigma^{ij}+O(h^2) \quad \text{and} \quad
g = \sigma + O(h^2)|\sigma_{ij}\sigma^{ij}|^{\frac{1}{2}}
\end{equation}
where we use summation convention.

 For a function $f:S \rightarrow \mathbb{R}$ we define its lift $\hat f:S_h \rightarrow \mathbb{R}$ to $S_h$ by $f(x) = \hat f(\psi(x), x)$, $x\in S$. For a function $f:S_h \rightarrow \mathbb{R}$ we define its lift $\tilde f:S \rightarrow \mathbb{R}$ to $S$ by $f = \hat{\tilde f}$. This terminus can be obviously extended to subsets.
Let $f \in W^{1,p}(S)$, $g \in W^{1,p^{*}}(S)$, $1\le p \le \infty$ and $p^{*}$
H\"older conjugate of $p$. 
In local coordinates $x=(x^i)$ of $S$ hold
\begin{equation} \label{4}
\int_S \left<D f, D g\right> = \int_S \frac{\partial f}{\partial x^i}\frac{\partial g}{\partial x^j}\sigma^{ij}(x)\sqrt{\sigma(x)}dx^idx^j,
\end{equation}
\begin{equation} \label{5}
\int_{S_h} \left<D \hat f, D \hat  g\right> = \int_{S} \frac{\partial f}
{\partial x^i}\frac{\partial g}{\partial x^j}a^{ij}(\psi(x), x)\sqrt{g(\psi(x), x)}
dx^idx^j,
\end{equation}
\begin{equation}  \label{101}
\int_S \left<D f, D g\right> = \int_{S_h} \left<D \hat f, D \hat  g\right> + 
O(h^2)\|f\|_{W^{1,p}(S)}
\|g\|_{W^{1,p^{*}}(S)},
\end{equation}
and similarly, 
\begin{equation} \label{100}
\int_S f =   \int_{S_h}\hat f+ O(h^2)\|f\|_{L^1(S)}
\end{equation}
where now $f\in L^1(S)$ is sufficient.

The bracket $\left<u,v\right>$ denotes here the scalar product of two tangent vectors $u,v$ (or their covariant counterparts). $\|\cdot \|_{W^{k,p}}$ denotes the usual Sobolev norm, $|\cdot |_{W^{k,p}}=\sum_{|\alpha|=k}\|D^{\alpha}\cdot \|_{L^p}$ and $H^k=W^{k,2}$.

Since the properties and aspects needed to prove a priori error estimates for finite element 
approximations are formulated in terms of integrals 
these observations concerning the transformation behavior of integrals  
essentially imply that the known error estimates from the Euclidean setting 
carry over to the surface case as far as convergence 
of at most quadratic order is concerned. Still, we present details in the following.

\section{The equation and its discretization}\label{section2}
Let $T>0$, $G_T = S \times [0, T]$ and let 
$a^{ij}:G_T \rightarrow T^{2,0}(S)$, $b^i:G_T\rightarrow T^{1,0}(S)$, 
$c: G_T\rightarrow T^{0,0}(S)$ be of class $C^1$
and $a^{ij}(\cdot, t)$, $b^i(\cdot, t)$, $t\in [0,T]$, sections 
of the corresponding tensor bundles, $a^{ij}$ symmetric and positive definite.
We consider the initial value problem
\begin{equation}\label{d1}
\begin{aligned}
 \frac{d}{dt}y -\nabla_i(a^{ij} \nabla_j y) + b^i\nabla_i y + c y =& f \text{ in } G_T, \quad
y(\cdot, 0) = & y_0 
 \end{aligned}
 \end{equation}
where $f\in L^2(0,T; H^1(S))$, $y_0 \in H^2(S)$ and $\nabla$ denotes 
the connection with respect to the induced metric.
Problem (\ref{d1}) has a unique solution $y \in C^0([0, T], H^1(S))\cap
L^2(0, T; H^2(S))$ and we have
\begin{equation}
\max_{0\le t \le T}\|y(t)\|_{H^2(S)}^2 + \int_0^T\|y_t(t)\|_{H^1(S)}^2dt
\le c \left(\|y_0\|_{H^2(S)}^2+\int_0^T\|f(t)\|_{H^1(S)}^2 dt\right).
\end{equation}

Let $0=t_0<t_1<...<t_{N-1}<t_N=T$ be a time grid with $\tau_n=t_n-t_{n-1}$, $n=1, ..., N$ and $\tau=\max_{1\le n\le N}\tau_n$. We set
\begin{equation}
\begin{aligned}
W_{h, \tau} = \{\Phi :& \Gamma_h(0) \times [0, T]\rightarrow \mathbb{R}: \\
& \Phi(\cdot, t)\in X_h \text{ is constant in }t\in (t_{n-1}, t_n), 1 \le n \le N\}
\end{aligned}
\end{equation}
and define the bilinear forms
\begin{equation} \label{8}
a:W^{1,p}(S)\times W^{1,p^{*}}(S)\rightarrow \mathbb{R}, \quad a(u,v) =\int_S \left<Du, Dv\right>+uvdx, 
\end{equation}
\begin{equation} \label{9}
a_h:W^{1,p}(S_h)\times W^{1,p^{*}}(S_h)\rightarrow \mathbb{R}, \quad a_h(u_h,v_h) =\int_{S_h} \left<Du_h, Dv_h\right>
+u_h v_hdx,
\end{equation}
\begin{equation} \label{9_}
a^n_h:W^{1,p}(S_h)\times W^{1,p^{*}}(S_h)\rightarrow \mathbb{R}, \quad  a^n_h(u_h,v_h) =\int_{S_h} \left<Du_h, Dv_h\right>_{\tilde g(t_n)}
+u_h v_hdx,
\end{equation}
\begin{equation} \label{d100}
(Du_h, Dv_h)_{\tilde g(t_n)}=\int_{S_h} \left<Du_h, Dv_h\right>_{\tilde g(t_n)}.
\end{equation}
The last but one equation needs a further definition.
Let $p_1, ..., p_3$ be the middle points of the three sides of $\tau$, 
$\tau \in T_h$, and $v,w \in T^{0,1}(\tau)$ then we define 
\begin{equation} \label{kor1}
\int_{\tau}\left< v,w \right>_{\tilde g (t_n) } = \frac{1}{6}|\tau|\sum_{k=1}^3a^{ij}(\tilde p_k)v_i(p_k)
w_j(p_k)
\end{equation}
where $(a^{ij}(\tilde p_k))$ is a contravariant representation 
with respect to local coordinates $(x^i)$ (belonging to our fixed atlas) in
a neighbourhod of $\tilde p_k$ in $S$ and $(v_i)(p_k)$, $(w_j)(p_k)$ are covariant representations 
with respect to the
orthogonal projections of $\frac{\partial}{\partial x^1}(\tilde p_k)$ and 
$\frac{\partial}{\partial x^2}(\tilde p_k
)$
on $\tau$. 
(Despite similar notation
$\tilde g$ does not refer to a metric.)

We define a discrete operator $G_h:  L^2(S)\rightarrow X_h, v \mapsto G_hv=z_h$ via
\begin{equation} \label{a30}
a_h(z_h, \varphi_h) = \int_{S_h}\hat v\varphi_h \quad \forall \varphi_h \in X_h.
\end{equation}
Furthermore, the brackets $(\cdot, \cdot)$ and $(\cdot, \cdot)_h$ denote the inner products 
of $L^2(S)$ 
and $L^2(S_h)$, respectively, and $\|\cdot \|$, $\|\cdot \|_h$ the 
corresponding norms. The semi-norm associated with the bilinear on the left-hand side of (\ref{d100}) is denoted by $\|\cdot \|_{\tilde g(t_n)}$
We denote the interpolation operator by $I_h$, define $P_h: L^2(\Gamma_0)\rightarrow X_h$ by
\begin{equation}
(\hat z,\phi_h)_h = (P_hz, \phi_h)_h \quad \forall \phi_h \in X_h,
\end{equation} 
let $R_h:H^1(S)\rightarrow X_h$ be defined by
\begin{equation}
a_h(R_h z, \phi_h) =a_h(\hat z, \phi_h)  \quad \forall \phi_h \in X_h
\end{equation}
and $R^n_h:H^1(S)\rightarrow X_h$ by
\begin{equation} \label{n1}
a^n_h(R^n_h z, \phi_h) =a^n_h(\hat z, \phi_h)  \quad \forall \phi_h \in X_h.
\end{equation}
It is well-known that
\begin{equation}
\|\hat z -R_hz\|_{L^2(S_h)} + h \|D(\hat z -R_hz)\|_{L^2(S_h)} \le ch^m \|z\|_{H^m(S)} 
\end{equation}
and
\begin{equation}
\|\hat z -R^n_hz\|_{L^2(S_h)} + h \|D(\hat z -R^n_hz)\|_{L^2(S_h)} \le ch^m \|z\|_{H^m(S)}
\end{equation}
hold for all  $z \in H^m(S)$, $m=1,2$. We conclude for $z \in H^2(S)$ that
\begin{equation}
\begin{aligned}
\|\hat z-R_h z\|_{L^{\infty}(S_h)} &\le \|\hat z-I_h z\|_{L^{\infty}(S_h)} + \|I_h z-R_h z\|_{L^{\infty}(S_h)} \\
&\le c h \|z\|_{H^2(S)} + c h^{-1}\|I_z-R_hz\|_{L^2(S_h)} \le c h\|z\|_{H^2(S_h)}.
\end{aligned}
\end{equation}
There holds 
\begin{equation}
\|\phi_h\|_{L^{\infty}(S_h)} \le \rho(h)\|\phi_h\|_{H^1(S_h)}
\end{equation}
for all $\phi_h \in X_h$ where $\rho(h)=\sqrt{|\log h|}$.

For $Y, \Phi \in W_{h, \tau}$ we let
\begin{equation}
\begin{aligned}
A(Y, \Phi) :=& \sum_{n=1}^N \tau_n (\nabla Y^n, \nabla\Phi^n)_{\tilde g(t_n)} + \sum_{n=2}^N(Y^n-Y^{n-1}, \Phi^n)_h
+(Y^0_{+}, \Phi^0_{+})_h \\
&+\sum_{n=1}^N \tau_n(b^i(t_n)\nabla_i Y^n, \Phi^n)_h + \sum_{n=1}^N\tau_n (c(t_n)Y^n, \Phi^n)_h
\end{aligned}
\end{equation}
where $\Phi^n:=\Phi^n_{-}$, $\Phi^n_{+}=\lim_{s\rightarrow 0+-}\Phi(t_n+s)$. 

Note, that the integrals $(b^i(t_n)\nabla_i Y^n, \Phi^n)_h$ and $(c(t_n)Y^n, \Phi^n)_h$
are defined similarly to (\ref{kor1}) by using a quadrature rule.
 
Our approximation $Y\in W_{h, \tau}$ of the solution $y$ of (\ref{d1}) is obtained by the following discontinuous 
Galerkin scheme:
\begin{equation} \label{n10}
A(Y, \Phi) = \sum_{n=1}^N\int_{t_{n-1}}^{t_n}(\widehat{y}, \Phi^n)_h + (\hat y_0, \Phi^0_{+})_h \quad 
\forall \phi \in W_{h, \tau}.
\end{equation}
The above solution will be denoted by $Y=G_{h, \tau}(y)$. 

\section{The error estimate}\label{section3}
We have the following uniform error estimate.
\begin{theorem} \label{main_result}
We have
\begin{equation} \label{n50}
\max_{1\le n\le N}\|\widehat y(\cdot, t_n)-Y^n\|_{L^{\infty}(S_h)}
\le c\rho(h)(h+\sqrt{\tau})(\|y_0\|_{H^2(S)}+\|f\|_{L^2(0, T; H^1(S)}).
\end{equation}
\end{theorem}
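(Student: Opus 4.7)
The plan is to transfer the proof of \cite[Theorem 3.1]{DeckelnickHinze} to the surface setting by using the integral transformation formulas (\ref{101}) and (\ref{100}) together with the time-dependent Ritz projection $R_h^n$ defined in (\ref{n1}). The key observation that makes the transfer work is that, as already announced in Section \ref{section1}, each bilinear form occurring in the Euclidean argument has a surface analogue that differs from it by at most $O(h^2)$; since the target rate is $O(\rho(h)(h+\sqrt{\tau}))$ these geometric mismatches will always be absorbable.

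First I would split the error at each time level as
\[
\widehat y(t_n)-Y^n = \bigl(\widehat y(t_n)-R_h^n y(t_n)\bigr) + \bigl(R_h^n y(t_n)-Y^n\bigr) =: \eta^n + \theta^n.
\]
For $\eta^n$, the stated Ritz error estimate for $R_h^n$ combined with the $L^\infty$-bound derived in the excerpt gives $\|\eta^n\|_{L^\infty(S_h)} \le c h\|y(t_n)\|_{H^2(S)}$, and the energy estimate for (\ref{d1}) bounds $\|y(t_n)\|_{H^2(S)}$ by the right-hand side of (\ref{n50}). It therefore suffices to control $\theta^n$. Since $\theta^n \in X_h$, the inverse estimate $\|\theta^n\|_{L^\infty(S_h)} \le \rho(h)\|\theta^n\|_{H^1(S_h)}$ reduces the proof to showing
\[
\max_{1\le n\le N}\|\theta^n\|_{H^1(S_h)} \le c(h+\sqrt{\tau})\bigl(\|y_0\|_{H^2(S)}+\|f\|_{L^2(0,T;H^1(S))}\bigr).
\]

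To derive this, I would subtract the weak form of (\ref{d1}) tested against the lift of $\Phi\in W_{h,\tau}$ from the discrete scheme (\ref{n10}), insert $R_h^n y$ on the discrete side, and use the defining identity (\ref{n1}) to eliminate the principal elliptic part. This produces an error equation of the shape
\[
A(\theta,\Phi) = \sum_{n=1}^N \bigl(\langle E_1^n,\Phi^n\rangle_h + \langle E_2^n,\Phi^n\rangle_h + \langle E_3^n,\Phi^n\rangle_h\bigr),
\]
where $E_1^n$ collects the geometric consistency errors from replacing $(\cdot,\cdot)$, $a(\cdot,\cdot)$ and the surface connection by $(\cdot,\cdot)_h$, $a_h^n(\cdot,\cdot)$ and the piecewise-linear gradient, each of size $O(h^2)$ by (\ref{101}), (\ref{100}) and the quadrature rule (\ref{kor1}); $E_2^n$ collects the time-discretisation consistency errors of size $O(\sqrt{\tau})$ arising from replacing $y_t$ by the piecewise difference quotient; and $E_3^n$ arises from the $t$-dependence of $R_h^n$ applied to $y_t$. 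Testing with $\Phi = \theta$ and exploiting the coercivity of $A(\cdot,\cdot)$ in the natural dG norm, as in the Euclidean proof, together with absorption of the lower-order $b^i,c$-contributions by Young's inequality and a discrete Gr\"onwall argument, then yields the asserted $H^1$-bound on $\theta^n$.

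The main obstacle is the bookkeeping needed to verify that every replacement of a continuous surface object by its discrete surface counterpart (in particular the time-dependent $a_h^n$ with its quadrature, and the lifted lower-order coefficients $b^i(t_n)$, $c(t_n)$) produces only an $O(h^2)$ correction, uniformly in $n$; once this is in place, the energy and Gr\"onwall steps carry over verbatim from the Euclidean argument of \cite{DeckelnickHinze}, and the claimed factor $\rho(h)(h+\sqrt{\tau})$ emerges by combining the $H^1$-bound on $\theta$ with the inverse inequality and the Ritz bound on $\eta$.
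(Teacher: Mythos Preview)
Your decomposition $\widehat y(t_n)-Y^n=\eta^n+\theta^n$ with $\theta^n=R_h^ny^n-Y^n$ and the reduction via the inverse inequality $\|\theta^n\|_{L^\infty(S_h)}\le\rho(h)\|\theta^n\|_{H^1(S_h)}$ coincide with the paper's set-up. The gap is the choice of test function in the energy step. Testing $A(\theta,\Phi)=\ldots$ with $\Phi=\theta$ and invoking the coercivity of $A$ in the \emph{natural} dG norm yields only
\[
\max_{1\le n\le N}\|\theta^n\|_h^2 \;+\; \sum_{n=1}^N \tau_n\|\nabla\theta^n\|^2_{\tilde g(t_n)} \;+\; \sum_{n=2}^N\|\theta^n-\theta^{n-1}\|_h^2,
\]
i.e.\ $L^\infty(0,T;L^2)\cap L^2(0,T;H^1)$ control. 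This does \emph{not} bound $\max_n\|\nabla\theta^n\|_h$, so the inverse inequality cannot be applied at each time level and the asserted $\max_n\|\theta^n\|_{H^1(S_h)}\le c(h+\sqrt\tau)$ does not follow from your argument as stated.

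The paper (and the Euclidean proof you cite) closes this by testing with the discrete time derivative
\[
\Phi^n=\frac{\bar E^n-\bar E^{n-1}}{\tau_n},\qquad 2\le n\le l,\qquad \Phi^n=0\ \text{otherwise},
\]
after first rescaling $\bar E^n=e^{\mu t_n}E^n$ to generate a favourable zero-order term in $A(\bar E,\Phi)$ (since $c$ need not have the right sign, and a Gr\"onwall argument alone does not fit cleanly with this test function). With this $\Phi$, the elliptic part telescopes and produces the pointwise term $\tfrac12\|\nabla\bar E^l\|^2_{\tilde g(t_l)}$; a separate choice $\Phi^1=\bar E^1$ handles the first step. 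Only then does one obtain $\max_n\|\theta^n\|_{H^1(S_h)}\le c(h+\sqrt\tau)$, after which your $\eta$/inverse-inequality reasoning finishes the proof. The $O(h^2)$ bookkeeping you describe is indeed routine, but it must be carried out against the stronger test function, where the right-hand side terms now pair with $(\bar E^n-\bar E^{n-1})/\tau_n$ rather than with $\theta^n$.
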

\begin{proof}
We adapt the proof of \cite[Theorem 3.1]{DeckelnickHinze}. Take $\Phi \in W_{h, \tau}$, multiply (\ref{d1})
by $\widetilde{\Phi^n}$ and integrate over $\Gamma_0 \times (t_{n-1}, t_n)$. Abbreviating $y^n:=y(\cdot, t_n)$
we have
\begin{equation} \label{n2}
\begin{aligned}
(y^n-&y^{n-1}, \widetilde{\Phi^n}) + \int_{t_{n-1}}^{t_n}\int_{\Gamma_0}a^{ij}D_i y D_j \widetilde{\Phi^n}
+b^iD_i y \widetilde{\Phi^n}+cy\widetilde{\Phi^n} dxdt \\
&
= \int_{t_{n-1}}^{t_n}(f, \widetilde{\Phi^n})dt.
\end{aligned}
\end{equation}
Next, let us introduce $\tilde Y\in W_{h, \tau}$ by
\begin{equation}
\tilde Y(\cdot, t):= R_h^ny^n, \quad t \in (t_{n-1}, t_n), 1 \le n \le N.
\end{equation}
Using (\ref{n1}) and (\ref{n2}) we derive 
\begin{equation}
\begin{aligned}
A(\tilde Y, \Phi) =& \sum_{n=1}^N\tau_n(\nabla R_h^ny^n, \nabla \Phi^n)_{\tilde g(t_n)}
+ \sum_{n=2}^N(R_h^ny^n-R_h^{n-1}y^{n-1}, \Phi^n)_h \\
&+(R_h^ny ^1, \Phi^0_+)_h \\
&+\sum_{n=1}^N \tau_n(b^i(t_n)\nabla_i \widehat{y^n}, \Phi^n)_h + \sum_{n=1}^N\tau_n (c(t_n)\widehat{y^n}, \Phi^n)_h \\
=& \sum_{n=1}^N\tau_n (\widehat{y^n}, \Phi^n)_h + \sum_{n=1}^N\tau_n(\nabla \widehat{y^n}, \nabla \Phi^n)_{\tilde g(t_n)}-\sum_{n=1}^N\tau_n(R_h^ny^n, \Phi^n)_h \\
&+ \sum_{n=2}^N(R_h^ny^n-R_h^{n-1}y^{n-1}, \Phi^n)_h
+(R_h^1y ^1, \Phi^0_+)_h \\
&+\sum_{n=1}^N \tau_n(b^i(t_n)\nabla_i \widehat{y^n}, \Phi^n)_h + \sum_{n=1}^N\tau_n (c(t_n)\widehat{y^n}, \Phi^n)_h\\
=& \sum_{n=1}^N\tau_n (\widehat{y^n}, \Phi^n)_h + \sum_{n=1}^N\tau_n(\nabla \widehat{y^n}, \nabla \Phi^n)_{\tilde g(t_n)}-\sum_{n=1}^N\tau_n(R_h^ny^n, \Phi^n)_h \\
&+ \sum_{n=2}^N(\widehat{y^n}, \Phi^n)_h+\sum_{n=2}^N(\nabla \widehat{y^n}, \nabla \Phi^n)_{\tilde g(t_n)}
-\sum_{n=2}^N(\nabla R_h^ny ^n, \nabla \Phi^n)_{\tilde g(t_n)} \\
&- \sum_{n=2}^N(\widehat{y^{n-1}}, \Phi^n)_h-\sum_{n=2}^N(\nabla \widehat{y^{n-1}}, \nabla \Phi^n)_{\tilde g(t_n)} \\
&+\sum_{n=2}^N(\nabla R_h^{n-1}y ^{n-1}, \nabla \Phi^n)_{\tilde g(t_n)} +(R_h^1y ^1, \Phi^0_+)_h \\
&+\sum_{n=1}^N \tau_n(b^i(t_n)\nabla_i \widehat{y^n}, \Phi^n)_h + \sum_{n=1}^N\tau_n (c(t_n)\widehat{y^n}, \Phi^n)_h
\end{aligned}
\end{equation}
\begin{equation}
\begin{aligned}
=& \sum_{n=1}^N \tau_n\left(\nabla \widehat{y^n}-\frac{1}{\tau_n}\int_{t_{n-1}}^{t_n}\nabla \hat y dt, \nabla \Phi ^n\right)_{\tilde g(t_n)}\\
&+\sum_{n=1}^N\int_{t_{n-1}}^{t_n}\int_{\Gamma_0}a^{ij}\nabla_i y \nabla_j \widetilde{\Phi^n} dxdt\\
&+ \sum_{n=1}^N\left( \int_{t_{n-1}}^{t_n} \nabla \hat y dt, \nabla \Phi^n\right)_{\tilde g(t_n)}
-\sum_{n=1}^N\int_{t_{n-1}}^{t_n}\int_{\Gamma_0}a^{ij}\nabla_i y \nabla_j \widetilde{\Phi^n} dxdt \\
&+ \sum_{n=1}^N\tau_n \left(b^i(t_n)\left(\nabla_i \widehat{y^n}-\frac{1}{\tau_n}\int_{t_{n-1}}^{t_n}
\nabla_i \hat y dt \right), \Phi^n\right)_h\\
& + \sum_{n=1}^N\int_{t_{n-1}}^{t_n}\int_{\Gamma_0}b^i\nabla_i y \widetilde{\Phi^n}dxdt \\
&+ \sum_{n=1}^N\left(b^i(t_n) \int_{t_{n-1}}^{t_n} \nabla_i \hat y dt, \Phi^n\right)_h
-\sum_{n=1}^N\int_{t_{n-1}}^{t_n}\int_{\Gamma_0}b^i\nabla_i y \widetilde{\Phi^n} dxdt \\
&+ \sum_{n=1}^N\tau_n \left(c(t_n)\left( \widehat{y^n}-\frac{1}{\tau_n}\int_{t_{n-1}}^{t_n}
\hat y dt \right), \Phi^n\right)_h + \sum_{n=1}^N\int_{t_{n-1}}^{t_n}\int_{\Gamma_0}c y \widetilde{\Phi^n}dxdt \\
&+ \sum_{n=1}^N\left(c(t_n) \int_{t_{n-1}}^{t_n} \hat y dt, \Phi^n\right)_h
- \sum_{n=1}^N\int_{t_{n-1}}^{t_n}\int_{\Gamma_0}cy \widetilde{\Phi^n}dxdt\\
&+ \sum_{n=1}^N\tau_n(\widehat{y^n}-R_h^ny^n, \Phi^n)_h \\
& + \sum_{n=2}^N\int_{t_{n-1}}^{t_n}(f(t), \widetilde{\Phi^n})dt\\
& - \sum_{n=2}^N \int_{t_{n-1}}^{t_n}\int_{\Gamma_0}a^{ij}\nabla_i y, \nabla_j \widetilde{\Phi^n}-b^i\nabla_iy \widetilde{\Phi^n}-cy\widetilde{\Phi^n}dxdt  \\
&+\sum_{n=2}^N (\widehat{y^n}-\widehat{y^{n-1}}, \Phi^n)_h-\sum_{n=2}^N(y^n-y^{n-1}, \widetilde{\Phi^n}) \\
&+ \sum_{n=2}^N(R_h^ny^n-\widehat{y^n}, \Phi^n)_h-\sum_{n=2}^N(R_h^{n-1}y^{n-1}-\widehat{y^{n-1}}, \Phi^n)_h \\
&+ (R_h^1y^1, \Phi^0_+)_h \\
=& J_1 + ... + J_{19} \\
=&J_1+J_5+J_9+J_{13}+J_{17}+J_{18}+J_{19}\\
& + \int_{t_0}^{t_1}\int_{\Gamma_0}a^{ij}\nabla_i y \nabla_j \widetilde{\Phi^n}+b^i\nabla_iy \widetilde{\Phi^n}+cy\widetilde{\Phi^n} dxdt \\
&+ \sum_{n=1}^N\left( \int_{t_{n-1}}^{t_n} \nabla \hat y dt, \nabla \Phi^n\right)_{\tilde g(t_n)}
+ \sum_{n=1}^N\left(b^i(t_n) \int_{t_{n-1}}^{t_n} \nabla_i \hat y dt, \Phi^n\right)_h\\
\end{aligned}
\end{equation}
\begin{equation}
\begin{aligned}
&+ \sum_{n=1}^N\left(c(t_n) \int_{t_{n-1}}^{t_n} \hat y dt, \Phi^n\right)_h \\
&-\sum_{n=1}^N\int_{t_{n-1}}^{t_n}\int_{\Gamma_0}a^{ij}\nabla_i y \nabla_j \widetilde{\Phi^n}+b^i\nabla_iy \widetilde{\Phi^n}+cy\widetilde{\Phi^n} dxdt \\
&+ \sum_{n=2}^N \int_{t_{n-1}}^{t_n}(f(t), \widetilde{\Phi^n})dt \\
&+\sum_{n=2}^N (\widehat{y^n}-\widehat{y^{n-1}}, \Phi^n)_h-\sum_{n=2}^N(y^n-y^{n-1}, \widetilde{\Phi^n})\\
=&  \sum_{n=1}^N \int_{t_{n-1}}^{t_n}(f(t), \widetilde{\Phi^n})dt+(y_0, \widetilde{\Phi^1})  \\
&+J_1+J_5+J_9+J_{13}+J_{17}+J_{18}+ (R_h^1y^1-\widehat{y^1}, \Phi^0_+)_h\\
&+ (\widehat{y^1}, \Phi^0_+)_h-(y^1, \widetilde{\Phi^0_+})\\
&+ \sum_{n=1}^N\left( \int_{t_{n-1}}^{t_n} \nabla \hat y dt, \nabla \Phi^n\right)_{\tilde g(t_n)}
+ \sum_{n=1}^N\left(b^i(t_n) \int_{t_{n-1}}^{t_n} \nabla_i \hat y dt, \Phi^n\right)_h\\
&+ \sum_{n=1}^N\left(c(t_n) \int_{t_{n-1}}^{t_n} \hat y dt, \Phi^n\right)_h \\
& -\sum_{n=1}^N\int_{t_{n-1}}^{t_n}\int_{\Gamma_0}a^{ij}\nabla_i y \nabla_j \widetilde{\Phi^n} 
+b^i\nabla_iy \widetilde{\Phi^n}+cy\widetilde{\Phi^n} dxdt \\
&+\sum_{n=2}^N (\widehat{y^n}-\widehat{y^{n-1}}, \Phi^n)_h-\sum_{n=2}^N(y^n-y^{n-1}, \widetilde{\Phi^n})
\end{aligned}
\end{equation}
To be in accordance with \cite{DeckelnickHinze} we denote
$r_1(\Phi):=J_1+J_5+J_9$, $r_2(\Phi):=J_{13}$, $r_3(\Phi):=J_{17}+J_{18}$, $r_4(\Phi)=(R_h^1y^1-\widehat{y^1}, \Phi^0_+)_h$ and $r(\Phi):= \sum_{i=1}^4 r_i(\Phi)$.

As a consequence, the error $E:= \tilde Y-Y$ satisfies
\begin{equation} \label{n4}
\begin{aligned}
A(E, \Phi) =& r(\Phi)\\
 & +\sum_{n=1}^N \int_{t_{n-1}}^{t_n}(f(t), \widetilde{\Phi^n})dt+(y_0, \widetilde{\Phi^1})  \\
 &-\sum_{n=1}^N\int_{t_{n-1}}^{t_n}(\widehat{f(t)}, \Phi^n)_h + (\hat y_0, \Phi^0_{+})_h \\
&+ (\widehat{y^1}, \Phi^0_+)_h-(y^1, \widetilde{\Phi^0_+})\\
&+ \sum_{n=1}^N\left( \int_{t_{n-1}}^{t_n} \nabla \hat y dt, \nabla \Phi^n\right)_{\tilde g(t_n)}
+ \sum_{n=1}^N\left(b^i(t_n) \int_{t_{n-1}}^{t_n} \nabla_i \hat y dt, \Phi^n\right)_h\\
\end{aligned}
\end{equation}
\begin{equation}
\begin{aligned}
&+ \sum_{n=1}^N\left(c(t_n) \int_{t_{n-1}}^{t_n} \hat y dt, \Phi^n\right)_h \\
& -\sum_{n=1}^N\int_{t_{n-1}}^{t_n}\int_{\Gamma_0}a^{ij}\nabla_i y \nabla_j \widetilde{\Phi^n} 
+b^i\nabla_iy \widetilde{\Phi^n}+cy\widetilde{\Phi^n} dxdt \\
&+\sum_{n=2}^N (\widehat{y^n}-\widehat{y^{n-1}}, \Phi^n)_h-\sum_{n=2}^N(y^n-y^{n-1}, \widetilde{\Phi^n}) \\
=& r(\Phi) + L_1+ ... + L_{12}
\end{aligned}
\end{equation}
Let $R_n, S_n$, $n=1, ..., N$, be real numbers then we use the (only formally reasonable) notation
\begin{equation}
 \left(\sum_{n=1}^NR_n\right)*S_n:=\sum_{n=1}^N R_nS_n.
\end{equation}
In this sense we can consider $A(E, \phi)*S_n$ where we assume $A(E, \phi)$ being written as a sum from $1$ to $N$ with summands  given by the terms on the right-hand side of (\ref{n4}) numbered by the time index of the test function, here $\Phi^1 = \Phi^0_+$ is considered to have time index $1$.

The idea is as follows. In order to show the error estimate we want to use that $|c|$ is large and that $c$ has the correct sign. Both conditions are not fulfilled in general. Therefore we define
\begin{equation} \label{n11}
\bar Y^n := e^{\mu t_n}Y^n
\end{equation}
where $\mu=1$ (we write $\mu$ instead of 1 to make the influence of this factor clear).

We derive a scheme which is fulfilled by $\bar Y$. This new scheme has compared to the scheme (\ref{n10})
an additional summand on the left-hand side and a modified right-hand side as will become clear in the following.

There holds
\begin{equation}
e^{\mu t_{n-1}}= e^{\mu t_n}- \mu e^{\mu t_n}\tau_n + \frac{1}{2}\mu^2e^{\mu\xi_n}\tau_n^2
\end{equation}
with some $\xi_n \in [t_{n-1}, t_n]$ and hence
\begin{equation}
\begin{aligned}
A(\bar Y, \Phi) =& A(Y, \Phi) * e^{\mu t_n} + \sum_{n=2}^N(e^{\mu t_n}-e^{\mu t_{n-1}})(Y^{n-1}, \Phi^n)_h  \\
=& A(Y, \Phi) * e^{\mu t_n} + \sum_{n=2}^N\mu (\tau_n - \frac{1}{2}\mu \tau_n^2e^{\mu(\xi_n-t_n)})(\bar Y^n, \Phi^n)_h\\
&- \sum_{n=2}^N\mu(\tau_n - \frac{1}{2}\mu \tau_n^2e^{\mu(\xi_n-t_n)})(\bar R^n, \Phi^n)_h \\
& - \sum_{n=2}^N\mu(\tau_n(1-e^{\mu \tau_n}) - \frac{1}{2}\mu \tau_n^2e^{\mu(\xi_n-t_n)}(1-e^{\mu\tau_n}))(\bar Y^{n-1}, \Phi^n)_h \\
=& A(Y, \Phi) * e^{\mu t_n} + M_1+ ... +M_3
\end{aligned}
\end{equation}
where $\bar R^n = \bar Y^n-\bar Y ^{n-1}$ and we remark that $M_1, ..., M_3$ depend on $Y, \Phi$.
We set $\overline{\tilde Y}^n = e^{\mu t_n} \tilde Y^n$, $\bar E^n = \overline{\tilde Y}^n-\bar Y^n$ and have
\begin{equation} \label{n12}
A(\bar E, \Phi) -M_1...-M_3 = A(E, \Phi) * e^{\mu t_n} 
\end{equation}
where 
\begin{equation}
\begin{aligned}
M_1+M_2 =& \sum_{n=2}^N \mu(\tau_n - \frac{1}{2}\mu \tau_n^2e^{\mu(\xi_n-t_n)})(\bar E^n, \Phi^n)_hÊ\\
&-\sum_{n=2}^N \mu(\tau_n-\frac{1}{2}\mu \tau_n^2e^{\mu(\xi_n-t_n)})( \bar E^n-\bar E^{n-1}, \Phi^n)_h
\end{aligned}
\end{equation}
and
\begin{equation}
|M_3| \le c\sum_{n=2}^N\tau_n^2|(\bar E^{n-1}, \Phi^n)_h|.
\end{equation}

Let us fix $l \in \{2, ..., N\}$ and define $\Phi \in W_{h, \tau}$ by
\begin{equation}
\Phi^n:= 
\begin{cases}
0, \quad n=1 \text{ or } n>l \\
\frac{\bar E^n-\bar E^{n-1}}{\tau_n}, \quad 2 \le n \le l.
\end{cases}
\end{equation}
We insert $\Phi$ in (\ref{n12}) and estimate the resulting left-hand side from below and the right-hand side from above. We begin with the first estimate.
We have
\begin{equation}
\begin{aligned}
A(\bar E, \Phi) =& \sum_{n=2}^l\tau_n\left(\nabla \bar E^n, \frac{\nabla \bar E^n-\nabla \bar E^{n-1}}{\tau_n}\right)_{\tilde g(t_n)} \\
&+ \sum_{n=2}^l\left(\bar E^n-\bar E^{n-1}, \frac{\bar E^n-\bar E^{n-1}}{\tau_n}\right)_h \\
& + \sum_{n=2}^l \tau_n\left(b^i(t_n)\nabla_i\bar E^n, \frac{\bar E^n-\bar E^{n-1}}{\tau_n}\right)_h \\
&+ \sum_{n=2}^l \tau_n\left(c(t_n)\bar E^n, \frac{\bar E^n-\bar E^{n-1}}{\tau_n}\right)_h \\
=& K_1 + ... + K_4.
\end{aligned}
\end{equation}
We have
\begin{equation}
\begin{aligned}
\sum_{n=2}^l &\left(\nabla \bar E^n-\nabla \bar E^{n-1}, \nabla \bar E^n-\nabla \bar E^{n-1}\right)_{\tilde g(\tau_n)} \\
=&
\sum_{n=2}^l(\nabla \bar E^n, \nabla \bar E^{n})_{\tilde g(t_n)}  -2 \sum_{n=2}^l(\nabla \bar E^n, \nabla \bar E^{n-1})_{\tilde g(t_n)}
\\
&+\sum_{n=2}^l(\nabla \bar E^{n-1}, \nabla \bar E^{n-1})_{\tilde g(t_n)}
+(\nabla \bar E^1, \nabla \bar E^1)_{\tilde g(t_2)} \\
\le& 
\sum_{n=2}^l(\nabla \bar E^n, \nabla \bar E^{n})_{\tilde g(t_n)} (2+c \tau_n) -2 \sum_{n=2}^l(\nabla \bar E^n, \nabla \bar E^{n-1})_{\tilde g(t_n)}\\
&+(\nabla \bar E^1, \nabla \bar E^1)_{\tilde g(t_2)}-(\nabla \bar E^l, \nabla \bar E^l)_{\tilde g(t_l)} \\
\le& 2 K_1 +c \sum_{n=1}^l(\nabla \bar E^n, \nabla \bar E^n)_{\tilde g(t_n)}\tau_n +(\nabla \bar E^1, \nabla \bar E^1)_{\tilde g(t_1)} 
-(\nabla \bar E^l, \nabla \bar E^l)_{\tilde g(t_n)}
\end{aligned}
\end{equation}
and hence
\begin{equation}
\begin{aligned}
K_1 \ge& \frac{1}{2}\sum_{n=2}^l\|\nabla \bar E^n-\nabla \bar E^{n-1}\|^2_{\tilde g(\tau_n)}
+\|\nabla \bar E^l\|^2_{\tilde g(t_n)}-\|\nabla \bar E^1\|^2_{\tilde g(t_1)}\\
& -c \sum_{n=1}^l\|\nabla \bar E^n\|^2_{\tilde g(t_n)}\tau_n.
\end{aligned}
\end{equation}
We write
\begin{equation}
K_2 = \sum_{n=2}^l\tau_n \left\|\frac{\bar E^n-\bar E^{n-1}}{\tau_n}\right\|_h^2.
\end{equation}
We have
\begin{equation}
\begin{aligned}
|K_3| \le & \sum_{n=2}^l\frac{1}{4}\tau_n \left\|\frac{\bar E^n-\bar E^{n-1}}{\tau_n}
\right\|_h^2 + 
\sum_{n=2}^l c \left\|\nabla \bar E\right\|_{\tilde g(t_n)}^2 \tau_n \\
|K_4|\le & \sum_{n=2}^l \frac{1}{4}\tau_n \left\|\frac{\bar E^n-\bar E^{n-1}}{\tau_n}\right\|_h^2
+ c \sum_{n=2}^l\tau_n \left\|\bar E^n\right\|_h^2.
\end{aligned}
\end{equation}
Furthermore, there holds
\begin{equation}
\begin{aligned}
-M_1 =& \sum_{n=2}^l \mu(\tau_n + \frac{1}{2}\mu \tau_n^2e^{\mu(\xi_n-\tau_n)})(\bar E^n, \Phi^n)_h\\
=& \sum_{n=2}^l \mu(1 + \frac{1}{2}\mu \tau_ne^{\mu(\xi_n-\tau_n)})(\bar E^n, \bar E^n-\bar E^{n-1})_h\\
=& \sum_{n=2}^l \mu(1 + \frac{1}{2}\mu \tau_ne^{\mu(\xi_n-\tau_n)})\\
& \{(\bar E^n, \bar E^n)_h-(\bar E^n-\bar E^{n-1}, \bar E^{n-1})_h
-(\bar E^{n-1}, \bar E^{n-1})_h\}\\
\ge & \frac{\mu}{2}(\bar E^l, \bar E^l)_h-\mu(\bar E^1, \bar E^1)_h
-\sum_{n=1}^{l-1}\mu c \tau  e^{\mu \tau}(\bar E^n, \bar E^n)_h \\
& +\frac{1}{8}\sum_{n=2}^l\tau_n \left\|\frac{\bar E^n-\bar E^{n-1}}{\tau_n}\right\|_h^2
\\
-M_2 \ge& -\frac{K_2}{4}.
\end{aligned}
\end{equation}

Hence the left-hand side of (\ref{n12}) can be estimated from below by
\begin{equation}
\begin{aligned}
& \frac{1}{2}\sum_{n=2}^l\|\nabla \bar E^n-\nabla \bar E^{n-1}\|^2_{\tilde g(\tau_n)}
+\frac{1}{2}\|\nabla \bar E^l\|^2_{\tilde g(t_l)}-\|\nabla \bar E^1\|^2_{\tilde g(t_1)}\\
&-\mu \|\bar E^1\|_h^2 -c \sum_{n=1}^{l-1}\|\nabla \bar E^n\|^2_{\tilde g(t_n)}\tau_n + \sum_{n=2}^l\frac{\tau_n}{8} \left\|\frac{\bar E^n-\bar E^{n-1}}{\tau_n}\right\|_h^2 \\
&+ \frac{\mu}{4}\|\bar E^l\|^2_h- \sum_{n=1}^{l-1}\mu c \tau e^{\mu \tau}\|\bar E^n\|^2_h.
\end{aligned}
\end{equation}
In the following we estimate the right-hand side of (\ref{n12}) from above and refer to \cite[Theorem 3.1]{DeckelnickHinze} for more details concerning the estimate of $r_i(\Phi)$, $i=1, 2,3$.

We have
\begin{equation}
\begin{aligned}
|r_1(\Phi)| \le& \frac{1}{8}\sum_{n=2}^l\|\nabla \bar E^n-\nabla \bar E^{n-1}\|_h^2 + c \tau \int_0^T\|\nabla y_t\|_{g(t)}^2 dt, \\
& + c \tau \|y_t\|^2_{L^2(0,T; L^2(\Gamma_0))} + \sum_{n=2}^l\|\bar E^n-\bar E^{n-1}\|_h^2,
 \\
|r_2(\Phi)|\le& \frac{1}{8}\sum_{n=2}^l\tau_n \left\| \frac{\bar E^n-\bar E^{n-1}}{\tau_n}\right\|_h^2+ch^4\max_{1\le n \le N}\|y^n\|_{H^2(\Gamma_0)}, \\
|r_3(\Phi)| \le& \frac{1}{8}\sum_{n=2}^l\tau_n \left\| \frac{\bar E^n-\bar E^{n-1}}{\tau_n}\right\|_h^2
+ch^2\int_0^T\|y_t\|^2_{H^1(\Gamma_0)}dt, \\
L_2=& L_4=L_5=L_6=0, \\
|L_1+L_3| \le& O(h^2)\left(\|f\|_{L^2(0,T; L^2(\Gamma_0))}^2+\sum_{n=1}^N\tau_n \left\| \frac{\bar E^n-\bar E^{n-1}}{\tau_n}\right\|_h^2\right), \\
|L_7 &+ ... + L_{10}| \\
\le& (O(\tau)+O(h^2))\left(\|y\|^2_{L^2(0, T; H^1(\Gamma_0))}+\sum_{n=1}^N\tau_n\left\|\frac{\bar E^n-\bar E^{n-1}}{\tau_n}\right\|_h^2\right), \\
|L_{11}+L_{12}|\le& O(h^2)\left(\|y_t\|^2_{L^2(0, T; L^2(\Gamma_0))}+\sum_{n=2}^N\tau_n \left\|\frac{\bar E^n-\bar E^{n-1}}{\tau_n} \right\|_h^2\right).  
\end{aligned}
\end{equation}

We estimate $\|E^1\|_h^2$ and $\|\nabla E^1\|_{\tilde g(t_1)}^2$. We choose $\Phi \in W_{h, \tau}$ with $\Phi^n=0$ for $n\ge 2$ and $\Phi^1=\bar E^1$ then (\ref{n12}) implies
\begin{equation} \label{n20}
A(\bar E, \Phi) = A(E, \Phi) *e^{\mu t_n}.
\end{equation}
We estimate the left-hand side of (\ref{n20})  from below
\begin{equation} \label{n21}
\begin{aligned}
A(\bar E, \Phi) =& \tau_1(\nabla \bar E^1, \nabla \Phi^1)_{\tilde g(t_1)} + (\bar E^1, \Phi^1)_h + \tau_1(b^i\nabla_i \bar E^1, \Phi^1)_h\\
&+\tau_1 (c(t_1)\bar E^1, \Phi^1)_h \\
\ge& \frac{\tau_1}{2}\|\nabla \bar E^1\|^2_{\tilde g(t_1)} + \frac{1}{2}\|E^1\|_h^2.
\end{aligned}
\end{equation}
Now we estimate the right-hand side of (\ref{n20})  from above and show that it is bounded from above by terms of type $c\tau^2 + c h^4$ where $c$ depends on $y$ and terms which can be absorbed by the terms on the right-hand side of (\ref{n21}). W.l.o.g. we may estimate $A(E, \Phi)$  instead of $A(E, \Phi)* e^{\mu t}$ and for this estimate we use (\ref{n4}). 

We begin with the term $r_1(\Phi)$ and present the details only for $J_1$, we have 
\begin{equation}
\begin{aligned}
J_1 =& \tau_1 \left(\nabla \widehat{y^1}-\frac{1}{\tau_1} \int_{t_0}^{t_1}\nabla \hat y dt, \nabla \Phi ^1\right)_{\tilde g(t_1)} \\
\le& \tau_1 \left\| \frac{1}{\tau_1}\int_{t_0}^{t_1}\int_t^{\xi_t}|\nabla y_t|\right\|_h\|\nabla \Phi^1\|_h \\
\le& c \tau^{\frac{3}{2}}\|\nabla y_t\|_{L^2(0,T, L^2(\Gamma_0))} \|\nabla \Phi^1\|_h \\
\le& c\tau^{\frac{3}{2}} \left(\frac{\epsilon}{\tau^{\frac{1}{2}}}\|\nabla \Phi^1\|_h^2+\frac{\tau^{\frac{1}{2}}}{\epsilon}\|\nabla y_t\|_{L^2(0,T, L^2(\Gamma_0))}^2\right).
\end{aligned}
\end{equation}
Furthermore, we have
\begin{equation}
r_2(\Phi) = \tau_1 (\widehat{y^1}-R^1_h y ^1, \Phi^1)_h = J_{13} \le c \tau h^2 \max_{t_0\le t \le t_1}\|y(t)\|,^2_{H^2(\Gamma_0)}
\end{equation}
\begin{equation}
r_3(\Phi) = J_{17}+J_{18} =0
\end{equation}
and
\begin{equation}
\begin{aligned}
r_4(\Phi) =& (R^1_h y^1-\widehat{y ^1}, \Phi^1)_h \\
\le& c h^2  \max_{t_0\le t \le t_1}\|y(t)\|^2_{H^2(\Gamma_0)}\|\Phi^1\|_h \\
\le& c h^2\left(\frac{\epsilon}{h^2}\|\Phi^1\|_h^2+\frac{h^2}{\epsilon} \max_{t_0\le t \le t_1}\|y(t)\|^2_{H^2(\Gamma_0)}\right).
\end{aligned}
\end{equation}
There holds $L_{11}=L_{12}=0$. The term $L_7$ together with the first summand in $L_{10}$ can be estimated from above by 
\begin{equation}
(O(h^2)+O(\tau))\|\nabla y\|_{L^2(0, T; L^2(\Gamma_0))}\|\bar E^1\|_h
\end{equation}
which is sufficient and the remaining terms can be treated similarly.
 \end{proof}

\end{document}